\documentclass[a4paper,abstracton,11pt]{scrartcl}
\usepackage[utf8]{inputenc}
\usepackage{amsfonts,amsthm,amssymb,amsmath}
\usepackage[inline]{enumitem}
\usepackage{color}
\usepackage{graphicx}
\usepackage{authblk}

\newtheorem{ctr}{}[section]
\newtheorem{theorem}[ctr]{Theorem}

\newtheorem{corollary}[ctr]{Corollary}
\newtheorem{conjecture}[ctr]{Conjecture}

\newcommand{\Sym}{\operatorname{Sym}}
\newcommand{\Aut}{\operatorname{Aut}}

\title{On asymmetric colourings of graphs with bounded degrees and infinite motion}
\author[1]{Florian Lehner\thanks{Florian Lehner was supported by the Austrian Science Fund (FWF), grant J 3850-N32}}
\author[2]{Monika Pil{\'s}niak\thanks{This work was partially supported by Ministry of Science and Higher Education of Poland and OEAD grant no.PL 08/2017.}}
\author[2]{Marcin Stawiski}
\affil[1]{Institute of Discrete Mathematics, Graz University of Technology, \protect\\ Steyrergasse 30, 8010 Graz, Austria}
\affil[2]{AGH University, Department of Discrete Mathematics, \protect\\al. Mickiewicza 30, 30-059 Krakow, Poland}

\begin{document}

\maketitle
\begin{abstract}
A vertex colouring of a graph is called asymmetric if the only automorphism which preserves it is the identity. 
Tucker conjectured that if every automorphism of a connected, locally finite graph moves infinitely many vertices, then there is an asymmetric colouring with $2$ colours.
We make progress on this conjecture in the special case of graphs with bounded maximal degree. More precisely, we prove that if every automorphism of a connected graph with maximal degree $\Delta$ moves infinitely many vertices, then there is an asymmetric colouring using $\mathcal O(\sqrt \Delta \log \Delta)$ colours.
This is the first improvement over the trivial bound of $\mathcal O(\Delta)$.
\end{abstract}

\section{Introduction}

Call a (not necessarily proper) vertex colouring of a graph \emph{asymmetric} if the only automorphism which preserves it is the identity.
This notion was first introduced by Babai \cite{BAB} in 1977, who proved that $2$ colours suffice for an asymmetric colouring of any regular tree. The concept was later reintroduced by Albertson and Collins \cite{AC} under the name \emph{distinguishing} colouring, and since has received a considerable amount of attention.  

In this paper we investigate connections between the maximal degree $\Delta$ of a graph and the number of colours needed for an asymmetric colouring. It is not hard to see that any connected finite graph has an asymmetric colouring with at most $\Delta+1$ colours, and it was shown independently by Collins and Trenk \cite{CT}, and Klav\v zar, Wong, and Zhou \cite{klavzar} that the only graphs which attain this bound are complete graphs, complete bipartite graphs, and the cycle on $5$ vertices. In 2007, Imrich, Klav\v zar and Trofimov \cite{imrich} extended this result to infinite graphs, showing that any connected infinite graph with maximal degree $\Delta$ has an asymmetric colouring with at most $\Delta$ colours. In \cite{LPS}, the authors of the present paper improved this bound to $\Delta -1$ for infinite graphs with maximal degree at least $3$ and this bound is sharp for every $\Delta \geq 3$.

It is worth noting that all known examples achieving this bound have an automorphism moving only few vertices. This motivates the concept of \emph{motion}. We say that a graph has motion $m$, if the minimal number of vertices moved by a non-trivial automorphism is $m$.
Motion turns out to be a powerful tool for bounding the number of colours needed in an asymmetric colouring. It is often used in the form of the so-called Motion Lemma, which states that if $G$ is finite graph with motion $m$ such that $2^{\frac m 2} \geq |\Aut G|$, then $G$ has an asymmetric colouring with 2 colours. Perhaps the first to explicitly state this result were Russell and Sundaram \cite{russell}, but it is worth pointing out that Cameron, Neumann, and Saxl \cite{primitive} implicitly use a generalisation to permutation groups to show that primitive permutation groups admit an asymmetric colouring with 2 colours, unless they are symmetric groups, alternating groups, or one of finitely many exceptions. The following conjecture due to Tucker \cite{tucker} can be seen as a generalisation of the Motion Lemma to infinite graphs.

\begin{conjecture}[Tucker \cite{tucker}]
\label{con:tucker}
Let $G$ be a connected, locally finite graph with infinite motion. Then there is an asymmetric colouring of $G$ with $2$ colours.
\end{conjecture}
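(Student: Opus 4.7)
The natural plan is to adapt the probabilistic Motion Lemma to the infinite setting via a compactness argument. I would consider a uniformly random $2$-colouring $c\in\{0,1\}^{V(G)}$ under the product Bernoulli measure $\mu$, and for each non-trivial $\phi\in\Aut G$ consider the event $E_\phi=\{c:c\circ\phi=c\}$. Since $\phi$ moves infinitely many vertices, it has either an infinite orbit or infinitely many orbits of size at least $2$; in either case the constraint that $c$ be constant on these orbits is an intersection of infinitely many independent events of probability $\le\tfrac12$, so $\mu(E_\phi)=0$. If $\Aut G$ were countable, a union bound would immediately yield a $\mu$-almost surely asymmetric colouring; the entire difficulty of the conjecture is that $\Aut G$ can be an uncountable Polish group.

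To handle the uncountable case, I would exploit local finiteness to stratify $\Aut G$. Fix a basepoint $v_0$ and equip $\Aut G$ with the topology of pointwise convergence; then $K_n$, the pointwise stabiliser of the ball $B_n(v_0)$, is a compact open subgroup, and $\bigcap_n K_n=\{e\}$. For each $n$, the cosets $\sigma K_n$ partition $\Aut G$ into countably many compact pieces. Every non-identity $\phi$ lies in some non-trivial coset $\sigma K_n$ (i.e.\ with $\sigma\notin K_n$), and the collection of such cosets across all $n$ is countable. The key step is to show that for each non-trivial coset $C=\sigma K_n$ the event ``some $\phi\in C$ preserves $c$'' has $\mu$-measure zero; countable additivity then finishes. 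To analyse one coset, factor $c$ as $(c|_{B_n(v_0)},c|_{V\setminus B_n(v_0)})$, observe that the first factor satisfies a finite constraint imposed by $\sigma$, and reduce to a statement about the shift action of the compact group $K_n$ on the outer colouring, where an ergodic or Fubini-type argument should yield the zero-measure conclusion.

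The central obstacle, and the reason Tucker's conjecture remains open, is that infinite motion gives no quantitative control over how different automorphisms within a coset interact. The events $E_\phi$ for $\phi\in C$ form a potentially uncountable union, and although each is $\mu$-null, the union need not be: the independent-cycle argument that kills each $E_\phi$ individually collapses once one conditions on the inner ball, because distinct automorphisms within $C$ can share or interleave cycles, so the ``sample space'' of preserving configurations inside one coset becomes correlated in subtle ways. Overcoming this appears to require either a quantitative strengthening of infinite motion (e.g.\ orbit growth or tree-like structure assumptions, as exploited in earlier partial results), or a deterministic construction of a sparse ``witness set'' on which the colouring is prescribed so as to annihilate an entire coset at once. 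In the bounded-degree regime of the present paper this killing step is bought at the cost of extra colours, giving rise to the $\mathcal O(\sqrt\Delta\log\Delta)$ bound; reducing all the way down to $2$ colours appears to need a genuinely new combinatorial idea.
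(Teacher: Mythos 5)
The statement you were asked to prove is Conjecture~\ref{con:tucker}, which the paper explicitly describes as ``still wide open in general'' and does not prove; its main result (Theorem~\ref{thm:main}) is only a partial step towards it, and is a deterministic layer-by-layer construction rather than a probabilistic argument. So there is no proof in the paper to compare yours against, and your proposal is not a proof either: it is an (accurately described) programme with an acknowledged hole at its centre.

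The preliminary steps you outline are correct and standard. Each individual event $E_\phi$ is indeed $\mu$-null, since an automorphism moving infinitely many vertices has either an infinite orbit or infinitely many disjoint orbits of size at least $2$, and monochromaticity on these is an intersection of independent events of probability at most $\tfrac12$. Likewise, for a connected locally finite graph the stabilisers $K_n$ of balls have countable index, every non-identity automorphism escapes some $K_n$, and so countable additivity would finish the argument \emph{if} one could show that for each non-trivial coset $C=\sigma K_n$ the event $\bigcup_{\phi\in C}E_\phi$ is null. That single step is the entire content of the conjecture in the random-colouring formulation, and you supply no argument for it beyond naming possible tools (``an ergodic or Fubini-type argument should yield the zero-measure conclusion''). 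As you yourself note, an uncountable union of null sets need not be null, and infinite motion gives no uniform control over how the automorphisms within one coset interleave their orbits. Since the gap is explicitly conceded rather than bridged, the proposal does not establish the statement; it is a survey of why the problem is hard. Had you wanted to connect to what the paper actually does, the relevant observation is that the authors sidestep the measure-theoretic difficulty entirely by paying for determinism with extra colours: they fix each ball $B(v_0,k)$ pointwise in turn using Corollary~\ref{cor:subsetstabiliser} to find small witness sets, which is why their bound is $\mathcal O(\sqrt\Delta\log\Delta)$ rather than $2$.
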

 
Despite numerous partial results towards this conjecture (see for example \cite{smith, lehner-random, lehner-intermediate, lehner-edge, watkins}) it is still wide open in general. In particular, it is still not known whether the conjecture holds for graphs with finite maximum degree. Recently, H\"{u}ning et al.~\cite{draft} proved that it is true for graphs with maximum degree at most 3, in \cite{LPS} this was extended to graphs with maximum degree at most 5, and by using similar arguments one can show that there is an asymmetric colouring for connected graphs with infinite motion with at most $\frac{\Delta}{3}+1$ colours. We note that, although this is a substantial improvement over the trivial bound of $\Delta + 1$ it is still linear in $\Delta$. 

In this short note we show that a graph with infinite motion and maximum degree $\Delta$ admits an asymmetric colouring with $1+(\frac 52 + \frac 32 \log_2 \Delta)\lceil \sqrt \Delta \rceil = \mathcal O(\sqrt \Delta \log \Delta)$ colours. To our best knowledge, this is the first such bound which is sublinear in $\Delta$ thus constituting a significant step towards Conjecture \ref{con:tucker} for graphs with finite maximum degree. 

\section{Definitions and auxiliary results}

Throughout this paper $G$ will denote an infinite, connected graph with vertex set $V$, edge set $E$, and maximal degree $\Delta \in \mathbb N$.  
Let $d$ denote the usual geodesic distance between vertices of $G$. For a vertex $v_0 \in V$ denote the \emph{ball} with centre $v_0$ and radius $k$ by $B(v_0,k) = \{v \in V \mid d(v_0,v) \leq k\}$, and the \emph{sphere} with centre $v_0$ and radius $k$ by $S(v_0,k) = \{v \in V \mid d(v_0,v) = k\}$. As usual we write $N(v)$ for the set of neighbours of a vertex $v$.

For a set $\Omega$, denote by $\Sym(\Omega)$ the group of all bijective functions from $\Omega$ to itself. Further denote by $\Sym_n := \Sym(\{1,\dots,n\})$.

Let $\Gamma$ be a group acting on a set $\Omega$. For $S\subseteq \Omega$, denote the \emph{setwise stabiliser} of $S$ by $\Gamma_S =\{\gamma \in \Gamma \mid \forall s \in S \colon \gamma s \in S\}$ and denote the \emph{pointwise stabiliser} of $S$ by $\Gamma_{(S)} =\{\gamma \in \Gamma \mid \forall s \in S \colon \gamma s = s\}$. Observe that $\Gamma_{(\Omega)}$ is a normal subgroup of $\Gamma$, but the same isn't necessarily true for $\Gamma_{(S)}$ for $S\subsetneq \Omega$. In particular, it makes sense to speak about the quotient $\Gamma / \Gamma_{(\Omega)}$, which is the group of all different permutations induced by $\Gamma$ on $\Omega$. The \emph{stabiliser} of a colouring $c$ of $\Omega$ is defined by $\Gamma_c = \{ \gamma \in \Gamma \mid \forall v \in \Omega : c(v) = c(\gamma v)\}$. A colouring $c$ of $\Omega$ is called \emph{asymmetric} if $\Gamma_c = \Gamma_{(\Omega)}$. Note that in the special case of $\Aut G$ acting on $G$, a vertex colouring is asymmetric if and only if its stabiliser only contains the identity automorphism.

The proof of our main result relies on a corollary to the following result from \cite{subgroupchains}.
\begin{theorem}[Cameron, Solomon, Turull \cite{subgroupchains}]
\label{thm:subgroupchainlength}
The maximal length of a chain of subgroups in $\Sym_n$ is $\lfloor \frac{3n-1}{2}\rfloor- b(n)$, where $b(n)$ denotes the number of ones in the binary representation of $n$.
\end{theorem}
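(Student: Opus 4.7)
The plan is to prove matching lower and upper bounds on the maximal subgroup chain length $L(n)$ in $\Sym_n$, using the convenient reformulation $\lfloor (3n-1)/2 \rfloor - b(n) = (n-b(n)) + \lfloor (n-1)/2 \rfloor$. For the lower bound, I would build an explicit chain exploiting the binary expansion $n = 2^{a_1} + \cdots + 2^{a_{b(n)}}$. I would start by embedding a Sylow $2$-subgroup $P$ of $\Sym_n$: it decomposes as a direct product of iterated wreath products of $\mathbb Z_2$, and by Legendre's formula it has order $2^{n - b(n)}$; as a $2$-group it therefore admits a composition series, which is a subgroup chain of length exactly $n - b(n)$. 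I would then extend $P$ upwards through $\Sym_n$ via a chain of nested Young subgroups $\Sym_{n_1}\times\cdots\times\Sym_{n_r}$ and alternating-to-symmetric refinements, chosen so that the successive prime indices encountered are precisely those arising from the odd prime factors of $n!$; a careful count (e.g.~ climbing through $\Sym_3 < \Sym_4 < \Sym_5 < \cdots$ and refining within each $A_k < \Sym_k$) contributes the remaining $\lfloor (n-1)/2 \rfloor$ indices, yielding the target length.

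For the upper bound, I would proceed by strong induction on $n$. Given a maximum-length chain $\{e\} = H_0 < H_1 < \cdots < H_k = \Sym_n$, the penultimate subgroup $M := H_{k-1}$ is maximal in $\Sym_n$ and so falls into one of four classical families: (i) the alternating group $A_n$; (ii) an intransitive Young subgroup $\Sym_a \times \Sym_{n-a}$; (iii) an imprimitive wreath product $\Sym_a \wr \Sym_b$ with $n=ab$ and $a,b \geq 2$; or (iv) a primitive subgroup distinct from $A_n$. The inductive hypothesis yields the recursive bounds $k \le 1 + L(A_n)$, $k \le 1 + L(a) + L(n-a)$, and $k \le 1 + b\, L(a) + L(b)$ in cases (i)--(iii); case (iv) I would dispatch using the Praeger--Saxl bound $|M| \le 4^n$ for primitive $M \neq A_n,\Sym_n$, giving $k \le 1 + 2n$, which is safely below the target for large $n$. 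Taking the maximum over all cases and partitions, one checks that the intransitive split $(a,n-a) = (2^{a_1}, n-2^{a_1})$ stripping off the leading binary digit is always optimal, and that it exactly reproduces the formula via the elementary identity $b(n) = b(n-2^{a_1}) + 1$ together with the interaction between $\lfloor (n-1)/2 \rfloor$ and $\lfloor (2^{a_1}-1)/2 \rfloor + \lfloor (n-2^{a_1}-1)/2 \rfloor$.

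The main obstacle is the primitive case (iv): asymptotically the Praeger--Saxl estimate handles it comfortably, but to pin down the \emph{exact} constant $3/2$ (rather than some larger value) one must verify that no primitive maximal subgroup can host a chain exceeding $\lfloor (3n-1)/2 \rfloor - b(n) - 1$, and this is precisely where one has to invoke the refined structural information supplied by the O'Nan--Scott classification, inspecting the affine, product-action, diagonal, and almost-simple families separately and treating the sporadic small-degree examples (such as the Mathieu groups) by direct computation. A secondary, more routine difficulty is the combinatorial bookkeeping needed to show that the intransitive split indeed dominates all imprimitive splits $n = ab$; this reduces to monotonicity of $L(n)/n$, which I would establish in parallel with the main induction.
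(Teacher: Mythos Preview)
The paper does not contain a proof of this theorem at all: it is quoted as a black box from Cameron--Solomon--Turull and immediately applied to deduce Corollary~\ref{cor:subsetstabiliser}. The authors even remark explicitly that the result relies on the classification of finite simple groups, and they offer the elementary substitute $|S| \le |\Omega'|\log_2|\Omega'|$ (coming from the crude chain bound $\log_2 n!$) for readers who do not wish to invoke it. So there is nothing in the paper against which to compare your argument; what you have sketched is essentially an outline of the original Cameron--Solomon--Turull proof, not of anything the present paper does.

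That said, your sketch contains a genuine slip in the upper-bound half. You write that the Praeger--Saxl bound $|M|\le 4^n$ for primitive $M\neq A_n,\Sym_n$ yields $k\le 1+2n$, ``which is safely below the target for large $n$''. It is not: the target is $\lfloor(3n-1)/2\rfloor - b(n)\sim \tfrac32 n$, and $1+2n$ exceeds this for every $n$. So Praeger--Saxl alone never disposes of case~(iv), not even asymptotically; one really does need the sharper O'Nan--Scott analysis (and ultimately CFSG) from the very start, as you partially concede in your final paragraph. Your lower-bound construction via a Sylow $2$-subgroup of order $2^{\,n-b(n)}$ followed by a climb through Young/alternating subgroups is correct in spirit and matches the standard argument.
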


\begin{corollary}
\label{cor:subsetstabiliser}
Let $\Gamma$ be a group acting on two finite sets $\Omega$ and $\Omega'$ respectively. If $\Gamma_{(\Omega')} \subseteq \Gamma_{(\Omega)}$, then there is $S \subseteq \Omega$ such that $|S|\leq \frac{3|\Omega'|}{2}$, and $\Gamma_{(S)} = \Gamma_{(\Omega)}$.
\end{corollary}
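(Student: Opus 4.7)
The plan is to construct $S$ greedily by building a strictly descending chain of pointwise stabilisers, and then to convert the length of this chain into a bound via Theorem \ref{thm:subgroupchainlength} applied to the action on $\Omega'$.

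More concretely, I would start with $S_0 = \emptyset$ and, as long as $\Gamma_{(S_i)} \supsetneq \Gamma_{(\Omega)}$, choose some $\omega \in \Omega$ which is not fixed by every element of $\Gamma_{(S_i)}$ and set $S_{i+1} = S_i \cup \{\omega\}$. Such an $\omega$ must exist because $\Gamma_{(\Omega)} = \bigcap_{\omega \in \Omega} \Gamma_{(\{\omega\})}$. By construction $\Gamma_{(S_{i+1})} \subsetneq \Gamma_{(S_i)}$, so the process terminates after finitely many steps (since $\Omega$ is finite and the chain is strictly decreasing) at some $S_k$ with $\Gamma_{(S_k)} = \Gamma_{(\Omega)}$ and $|S_k| = k$.

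The key step is to bound $k$. Consider the natural homomorphism $\phi \colon \Gamma \to \Sym(\Omega')$ induced by the action on $\Omega'$; its kernel is precisely $\Gamma_{(\Omega')}$. By hypothesis $\Gamma_{(\Omega')} \subseteq \Gamma_{(\Omega)} \subseteq \Gamma_{(S_i)}$ for every $i$, so each inclusion $\Gamma_{(S_{i+1})} \subsetneq \Gamma_{(S_i)}$ descends to a strict inclusion $\phi(\Gamma_{(S_{i+1})}) \subsetneq \phi(\Gamma_{(S_i)})$ inside $\phi(\Gamma) \leq \Sym(\Omega')$. This produces a chain of subgroups of $\Sym(\Omega')$ of length $k$, and Theorem \ref{thm:subgroupchainlength} gives $k \leq \lfloor \tfrac{3|\Omega'|-1}{2}\rfloor - b(|\Omega'|) \leq \tfrac{3|\Omega'|}{2}$, so $|S_k| \leq \tfrac{3|\Omega'|}{2}$ as required.

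The only subtle point, and the place where care is needed, is verifying that strict containment is preserved when passing to the quotient by $\Gamma_{(\Omega')}$; this is exactly where the hypothesis $\Gamma_{(\Omega')} \subseteq \Gamma_{(\Omega)}$ is used, since it guarantees that the kernel of $\phi$ lies inside every $\Gamma_{(S_i)}$ in the chain. Beyond that, the argument is routine, and the (trivial) base case $|\Omega'|=0$ can be dispatched by taking $S=\emptyset$.
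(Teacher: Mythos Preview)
Your proposal is correct and follows essentially the same approach as the paper: greedily build a strictly descending chain of pointwise stabilisers, then pass to the action on $\Omega'$ to bound its length via Theorem~\ref{thm:subgroupchainlength}. The only cosmetic difference is that you phrase the passage to $\Sym(\Omega')$ via the homomorphism $\phi$ and its images, whereas the paper uses the quotient groups $\Gamma_i/\Gamma_{(\Omega')}$; these are of course the same thing, and your remark about why strict containment survives the quotient is exactly the paper's observation that $\Gamma_{(\Omega')} \unlhd \Gamma_i$.
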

\begin{proof}
If $\Gamma$ acts trivially on $\Omega$ we can choose $S = \emptyset$. Otherwise pick $s_1\in\Omega$ with $\Gamma_{(s_1)} \subsetneq \Gamma_{(\Omega)}$. If $\Gamma_{(s_1,s_2,\ldots ,s_i)} \neq \Gamma_{(\Omega)}$, we can inductively pick $s_{i+1} \in \Omega$ which is not stabilised by $\Gamma_{(s_1,s_2,\ldots ,s_i)}$. This process terminates when $\Gamma_{(s_1,\ldots ,s_k)} = \Gamma_{(\Omega)}$.

For $1 \leq i \leq k$, let $\Gamma_i = \Gamma_{(s_1,\dots,s_i)}$. Recall that $\Gamma_{(\Omega')} \unlhd \Gamma$, and since $\Gamma_{(\Omega')}\subseteq \Gamma_{(\Omega)} \subseteq \Gamma_i$, we also have $\Gamma_{(\Omega')} \unlhd \Gamma_i$. In particular we have a chain
\[
    \Sym(\Omega') \supseteq \Gamma / \Gamma_{(\Omega')} \supsetneq \Gamma_1 / \Gamma_{(\Omega')} \supsetneq \dots \supsetneq \Gamma_k / \Gamma_{(\Omega')},
\]
and the Corollary follows from Theorem \ref{thm:subgroupchainlength}.
\end{proof}

It is worth noting that Theorem \ref{thm:subgroupchainlength} (and thus also Corollary \ref{cor:subsetstabiliser}) relies on the classification of finite simple groups. We wish to point out that a slightly weaker bound can be achieved by elementary arguments. Recall that the order of any subgroup divides the order of the ambient group. Since $|S_n| = n! \leq n^n$, any chain of subgroups of $S_n$ has length at most $n \log_2 n$. Using this estimate in the proof of Corollary \ref{cor:subsetstabiliser} leads to an analogous conclusion with $|S| \leq |\Omega'| \log_2 |\Omega'|$.

\section{Proof of the main result}

\begin{theorem}
\label{thm:main}
There is a function $\alpha=\mathcal O(\sqrt \Delta \log \Delta)$ such that every connected graph with infinite motion and finite maximal degree $\Delta$ has an asymmetric colouring with at most $\alpha$ colours.
\end{theorem}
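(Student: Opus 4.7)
Fix a basepoint $v_0 \in V$ and colour it with a distinguished colour used nowhere else, so that every colour-preserving automorphism must fix $v_0$. The task then reduces to asymmetrizing the action of $\Gamma := \Aut(G)_{v_0}$ using a palette of $(\tfrac52+\tfrac32\log_2\Delta)\lceil\sqrt\Delta\rceil$ remaining colours. Writing $B_k := B(v_0,k)$ and $S_k := S(v_0,k)$, infinite motion is equivalent to $\bigcap_{k\ge 0}\Gamma_{(B_k)} = \{\mathrm{id}\}$, since any non-identity element of $\Gamma$ moves infinitely many vertices and thus lies outside $\Gamma_{(B_k)}$ for sufficiently large $k$.

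The colouring will be built sphere by sphere. After colouring $B_k$, let $H_k \le \Gamma$ denote the stabiliser of the partial colouring; the $H_k$ form a decreasing chain with $\Gamma_{(B_k)} \subseteq H_k$, and it suffices to ensure $\bigcap_k H_k = \{\mathrm{id}\}$. Equivalently, the colouring of each new sphere $S_k$ should kill the induced action of $H_{k-1}$ on $S_k$. My plan is to maintain inductively a small \emph{control set} $T_{k-1} \subseteq B_{k-1}$ of size at most $\lceil\sqrt\Delta\rceil$ whose pointwise stabiliser in $\Gamma$ already coincides with $\Gamma_{(B_{k-1})}$, so that the hypothesis of Corollary~\ref{cor:subsetstabiliser} holds when one applies it to the action of $H_{k-1}$ on $\Omega = S_k$ with $\Omega' = T_{k-1}$. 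The corollary then yields a witness set $W_k \subseteq S_k$ of size at most $\tfrac32\lceil\sqrt\Delta\rceil$ such that pointwise stabilising $W_k$ in $H_{k-1}$ already pointwise stabilises all of $S_k$; assigning distinct colours from a dedicated block of $\lceil\sqrt\Delta\rceil$ colours to the vertices of $W_k$ then eliminates the residual action of $H_{k-1}$ on $S_k$.

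The $\lceil\sqrt\Delta\rceil$ factor in the bound reflects the size of the witness sets, while the $\tfrac52+\tfrac32\log_2\Delta$ factor counts the number of parallel colour blocks that must be available so that colour choices on successive $W_k$ do not accidentally match earlier patterns and create spurious symmetries; rotating through $\Theta(\log\Delta)$ palette blocks and invoking Theorem~\ref{thm:subgroupchainlength} inside $\Sym_{\lceil\sqrt\Delta\rceil}$ (where chains have length at most $\tfrac32\lceil\sqrt\Delta\rceil$) should give the stated coefficient. The step I expect to be hardest is maintaining the inductive invariant: from each witness set $W_k$ together with $T_{k-1}$ one must extract a new control set $T_k \subseteq B_k$ still of size $\lceil\sqrt\Delta\rceil$, rather than growing by a factor of $\tfrac32$ at every sphere. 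Here infinite motion has to be used in a more refined way than in the bare intersection statement: any genuine residual symmetry in $H_k \setminus \Gamma_{(B_k)}$ must propagate outward as $k$ grows, so that the induced permutation groups on the $T_k$ strictly shrink along the chain, with Theorem~\ref{thm:subgroupchainlength} bounding the length of this shrinking process and hence the number of colour blocks required.
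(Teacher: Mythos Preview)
Your plan has a genuine gap at the point where you invoke Corollary~\ref{cor:subsetstabiliser}. You want to apply it to the action of $H_{k-1}$ with $\Omega = S_k$ and $\Omega' = T_{k-1}$, which requires $(H_{k-1})_{(T_{k-1})} \subseteq (H_{k-1})_{(S_k)}$. But your invariant gives only $\Gamma_{(T_{k-1})} = \Gamma_{(B_{k-1})}$, and since only $B_{k-1}$ has been coloured so far we have $\Gamma_{(B_{k-1})} \subseteq H_{k-1}$, hence $(H_{k-1})_{(T_{k-1})} \supseteq \Gamma_{(B_{k-1})}$. There is no reason for $\Gamma_{(B_{k-1})}$ to fix $S_k$ pointwise: in the $\Delta$-regular tree, $\Gamma_{(B_{k-1})}$ permutes the children of each vertex of $S_{k-1}$ independently, so it acts on $S_k$ with orbits of size $\Delta-1$. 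Thus the corollary simply does not apply, and no colouring of a bounded-size subset $W_k$ of $S_k$ can kill the induced action of $H_{k-1}$ on $S_k$. Relatedly, your statement that infinite motion is ``equivalent to $\bigcap_k \Gamma_{(B_k)} = \{\mathrm{id}\}$'' is not correct: that intersection is trivial for \emph{any} connected graph, with or without infinite motion.

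The paper's proof avoids this obstacle by not attempting to trivialise the action on $S_{k+1}$ at step $k+1$. Instead it maintains the weaker invariant that all $\Gamma_k$-orbits on $S(v_0,k)$ have size at most $\lceil\sqrt\Delta\rceil$, and uses these small orbits $A_1,\dots,A_n$ on $S(v_0,k)$ as the sets $\Omega'$ in repeated applications of the corollary. The sets $\Omega$ are not subsets of $S(v_0,k+1)$ but families $\mathcal B_i$ of equivalence classes defined by ``same neighbourhood in $A_1\cup\dots\cup A_i$''; here the inclusion $(\Gamma_{k,i})_{(A_{i+1})}\subseteq (\Gamma_{k,i})_{(\mathcal B_{i+1})}$ is automatic from the definition of $R_{i+1}$, which is exactly what fails in your direct approach. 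The $\log_2\Delta$ factor then arises because each time a vertex's class lands in the witness set $\mathcal S$ its class size halves, not from rotating palette blocks. Finally, infinite motion is used only once, via a splicing argument: if some $\gamma\in\Gamma_{k+1}$ fixed all classes of $\mathcal B_n$ but moved a vertex of $B(v_0,k)$, one could replace $\gamma$ by the identity outside $B(v_0,k)$ and obtain a non-trivial automorphism of finite support. Your ``control set'' bookkeeping and palette rotation do not capture any of this structure, and the step you flagged as hardest (shrinking $T_k$ back to size $\lceil\sqrt\Delta\rceil$) is moot because the preceding step already fails.
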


\begin{proof}

We will inductively define colourings $c_k$ using colours in the set $\{0,1,2,3,\ldots\} \cup \{\overline 1, \overline 2, \overline 3, \ldots\} \cup \{\infty\}$. Note that this set is infinite, however, we will later determine bounds on the number of colours that we actually use.

Choose an arbitrary root $v_0\in V$. The following properties will be satisfied for every $k\geq 0$.
\begin{enumerate}[label=(\alph*)]
    \item \label{itm:ck-rootcolour} $c_k(v) = 0$ if and only if $v=v_0$,
    \item \label{itm:ck-awaycolour} $c_k(v)=\infty$ if and only if $d(v,v_0) > k$,
    \item \label{itm:ck-restriction} $ c_k|_{B(v_0,k-1)}= c_{k-1}|_{B(v_0,k-1)}$ if $k > 0$,
    \item \label{itm:ck-stabiliser} Let $\Gamma_k$ be the stabiliser of $c_k$ in $\Gamma = \Aut G$, then
    \begin{itemize}
        \item $ \Gamma_k \leq \Gamma_{(B(v_0,k-1))} $ if $k > 0$,
        \item all orbits under $\Gamma_k$ in $B(v_0,k)$ have size at most $\lceil \sqrt{\Delta} \rceil$.
    \end{itemize}
\end{enumerate}
Note that \ref{itm:ck-rootcolour} implies that any automorphism which preserves $c_k$ must fix $B(v_0,k)$ setwise, and thus it makes sense to speak of orbits of $\Gamma_k$ in $B(v_0,k)$.

\subsection{Constructing an asymmetric colouring}

Before we turn to the construction of the colourings $c_k$, let us show that they yield an asymmetric colouring of $G$. By \ref{itm:ck-restriction} we can define a colouring $c_\infty = \lim_{k \to \infty} c_k$ by $c_\infty|_{B(v_0,k)} =c_k|_{B(v_0,k)}$ for every $k$. By \ref{itm:ck-rootcolour}, the only vertex with $c_\infty(v) = 0$ is $v_0$, hence $v_0$ is fixed by any automorphism preserving $c_\infty$. Consequently $B(v_0,k)$ is fixed setwise by any colour preserving automorphism. Since $c_\infty|_{B(v_0,k)} = c_{k+1}|_{B(v_0,k)}$, property \ref{itm:ck-stabiliser} implies that every automorphism in the stabiliser of $c_\infty$ fixes $B(v_0,k)$ pointwise. This is true for every $k$ whence $c_\infty$ is asymmetric.

\subsection{Inductive construction of the colourings}
For $k=0$ let $c_0(v_0) = 0$ and $c_0(v) = \infty$ for $v \neq v_0$. Properties \ref{itm:ck-rootcolour} to \ref{itm:ck-stabiliser} are trivially satisfied for this colouring.

For the inductive definition of $c_{k+1}$, assume that we already have defined $c_k$ with the desired properties. By \ref{itm:ck-awaycolour} and \ref{itm:ck-restriction}, the only vertices where $c_{k+1}$ and $c_k$ can differ are those in $S(v_0,k+1)$, hence it suffices to describe the colouring $c_{k+1}$ on $S(v_0,k+1)$.

Recall that by \ref{itm:ck-rootcolour}, the stabiliser $\Gamma_k$ of $c_k$ must fix $S(v_0,k)$ and $S(v_0,k+1)$ setwise. Let $A_1,\dots,A_n$ be the orbits of $\Gamma_k$ on $S(v_0,k+1)$. Define equivalence relations $R_i$ for $0 \leq i \leq n$ on $S(v_0,k+1)$ by
\[
    x R_i y \iff 
    \forall j \leq i\colon N(x)\cap A_j = N(y) \cap A_j.
\]

Note that the condition is void for $i=0$, hence any two vertices are equivalent under $R_0$. Further note that $R_i$ has at most one equivalence class of size larger than $\Delta$, namely the class which has no neighbours in $A_j$ for any $j \leq i$. All other equivalence classes are contained in the neighbourhood of some vertex and thus have size at most $\Delta$. This argument also shows that all equivalence classes with respect to $R_n$ have size at most $\Delta$ since every vertex in $S(v_0,k+1)$ has a neighbour in $S(v_0,k) = \bigcup _{j \leq n} A_j$.

Denote by $\mathcal B_i$ the set of equivalence classes with respect to the relation $R_i$. Observe that $\mathcal B_i$ and $\mathcal B_{i+1}$ are nested, that is, for every $B \in \mathcal B_{i+1}$ there is $B'\in \mathcal B_i$ with $B \subseteq B'$.

Let $x R_i y$ and $\phi \in \Gamma_k$. Then $\phi x R_i \phi y$ because $\phi$ maps neighbours of $x$ and $y$ to neighbours of $\phi x$ and $\phi y$ while fixing every $A_j$ setwise. This shows that $\Gamma_k$ in its natural action on subsets of $S(v_0,k)$ acts on $\mathcal B_i$ by permutations. Denote by $\Gamma_{k,i}$ the pointwise stabiliser of $\mathcal B_i$ in $\Gamma_k$ with respect to this action.

Before we turn to the construction of $c_{k+1}$, we need one last definition. If $c\colon S(v_0,k+1) \to \mathbb N$ is a colouring, then we define the \emph{induced colouring} $c[\mathcal B_i]\colon \mathcal B_i \to \mathbb N$ by
\[
    c[\mathcal B_i](B) = \min_{v\in B} c(v).
\]
Note that the stabiliser of any colouring $c$ satisfies $\Gamma_c \subseteq \bigcap_{1\leq i \leq n} \Gamma_{c[\mathcal B_i]}$.

We now inductively define colourings $c_{k,i}$ of $S(v_0,k+1)$ for $0 \leq i \leq n$ satisfying the following properties.
\begin{enumerate}[label=(\roman*)]
    \item \label{itm:cki-restriction} $c_{k,i}[\mathcal B_j] = c_{k,i-1}[\mathcal B_j]$ for $i\geq 1$ and $j<i$.
    \item \label{itm:cki-monochromatic} Any $B \in \mathcal B_i$ is monochromatic under $c_{k,i}$.
    \item \label{itm:cki-stabiliser} $\tilde \Gamma_{k,i} := \bigcap _{j \leq i} (\Gamma_k)_{c_{k,i}[\mathcal B_j]} \subseteq \Gamma_{k,i}$
\end{enumerate}

Define $c_{k,0} \equiv 1$. The statement \ref{itm:cki-restriction} is void for $i =0$, and properties \ref{itm:cki-monochromatic} and \ref{itm:cki-stabiliser} are clearly satisfied.

Now assume that we already defined $c_{k,i}$ with the desired properties. The only way two vertices can be mapped to each other by $\Gamma_{k,i}$ and are not equivalent in $R_{i+1}$ is, if they have different neighbours in $A_{i+1}$. Hence the stabiliser of $A_{i+1}$ in $\Gamma_{k,i}$ fixes $\mathcal B_{i+1}$ pointwise, and in particular $(\Gamma_{k,i})_{(A_{i+1})} \subseteq \Gamma_{k,i+1}$. Since $\tilde \Gamma_{k,i} \subseteq \Gamma_{k,i}$ we have $(\tilde \Gamma_{k,i})_{(A_{k+1})} \subseteq \tilde \Gamma_{k,i} \cap \Gamma_{k,i+1}  \subseteq (\tilde \Gamma_{k,i})_{(\mathcal B_{i+1})}$, and by Corollary \ref{cor:subsetstabiliser}, there is a subset $\mathcal S \subseteq \mathcal B_{i+1}$ such that 
\[
|\mathcal S| \leq \frac {3|A_{k+1}|}2 \leq \frac {3\lceil \sqrt \Delta\rceil}2 
\]
and 
\[(\tilde \Gamma_{k,i})_{(\mathcal S)} = (\tilde \Gamma_{k,i})_{(\mathcal B_{i+1})}.
\]
Let $\mathcal S = \{S_1,\dots,S_\ell \}$ be a minimal subset (with respect to cardinality) with these two properties. For $j \in \{1,\dots,\ell\}$ and each $v \in S_j$ let $c_{k,i+1}(v) = c_{k,i}(v) + j$. For all other vertices set $c_{k,i+1}(v) = c_{k,i}(v)$. Property \ref{itm:cki-monochromatic} for $c_{k,i+1}$  trivially follows from \ref{itm:cki-monochromatic} for $c_{k,i}$. It remains to show that conditions \ref{itm:cki-restriction} and \ref{itm:cki-stabiliser} hold.

For \ref{itm:cki-restriction}, first note that for any $j \leq i$ and $B'' \in \mathcal B_j$, there is $B' \subseteq B''$ with $B' \in \mathcal B_i$ such that all vertices of $B'$ are coloured with colour $c_{k,i}[\mathcal B_j](B'')$. This follows from \ref{itm:cki-monochromatic} for $c_{k,i}$ and the definition of $c_{k,i}[\mathcal B_j]$. In order to ensure \ref{itm:cki-restriction} for $c_{k,i+1}$, we need to ensure that the colours of some vertices in $B'$ do not change. To this end, it is enough to show that there is $B \subseteq B'$ with $B \in \mathcal B_{i+1} \setminus \mathcal S$.

Assume for a contradiction that every such $B$ is contained in $\mathcal S$. Pick $S \in \mathcal S$ with $S\subseteq B'$ and let $\mathcal S' = \mathcal S \setminus \{S\}$. By minimality of $\mathcal S$ we know that $(\tilde \Gamma_{k,i})_{(\mathcal S)} \subsetneq (\tilde \Gamma_{k,i})_{(\mathcal S')}$. In particular, there must be $\gamma \in (\tilde \Gamma_{k,i})_{(\mathcal S')}$ which acts non-trivially on $S$. Since \ref{itm:cki-stabiliser} holds for $\tilde \Gamma _{k,i}$ we have that $\gamma S \subseteq B'$, but this contradicts the fact that every other $B\in\mathcal B_{i+1}$ with $B \subseteq B'$ is contained in $\mathcal S'$ and thus fixed by $(\tilde \Gamma_{k,i})_{(\mathcal S')}$.

The same argument as above also shows that for $S\in \mathcal S$ and $B'\in \mathcal B_i$ with $S \subseteq B$ we have that $S$ is not stabilised by $\tilde  \Gamma_{k,i}$. In particular, $S$ can be mapped to a different subclass of $B'$ and hence $|S| \leq \frac{|B'|}2$. This fact will be used later to bound the number of used colours.

For the proof of \ref{itm:cki-stabiliser}, note that by definition of $\mathcal S$ we have $(\tilde \Gamma_{k,i})_{(\mathcal S)}\subseteq (\Gamma_{k,i})_{(\mathcal B_{i+1})} = \Gamma_{k,i+1}$. Further note that by \ref{itm:cki-restriction} we have $(\Gamma_k)_{c_{k,i}[\mathcal B_j]} = (\Gamma_k)_{c_{k,i+1}[\mathcal B_j]}$ for every $j \leq i$, and consequently
\[
\tilde \Gamma_{k,i+1} = \tilde \Gamma_{k,i} \cap (\Gamma_k)_{c_{k,i+1}[\mathcal B_{i+1}]} = (\tilde \Gamma_{k,i})_{c_{k,i+1}[\mathcal B_{i+1}]}.
\]
So all we need to show is that
\[
(\tilde \Gamma_{k,i})_{c_{k,i+1}[\mathcal B_{i+1}]} \subseteq (\tilde \Gamma_{k,i})_{(\mathcal S)},
\]
that is, every element of $\tilde \Gamma _{k,i}$ which preserves $c_{k,i+1}[\mathcal B_{i+1}]$ must fix $\mathcal S$ pointwise.

Let $S \in \mathcal S$, let $\gamma \in (\tilde \Gamma_{k,i})_{c_{k,i+1}[\mathcal B_{i+1}]}$ and let $B \in \mathcal B_i$ such that $S \subseteq B$. By \ref{itm:cki-stabiliser} we know that $\gamma \in \Gamma_{i,k}$ and thus $\gamma S \subseteq B$. Since $S \in \mathcal S$ we have (by \ref{itm:cki-monochromatic} for $c_{k,i}$ and the definition of $c_{k,i+1}$) that the colour of the vertices of $S$ is different from the vertices in any subclass of $B$, and hence $\gamma S = S$.

Finally, we obtain $c_{k+1}$ from $c_{k,n}$ as follows. Split every equivalence class in $\mathcal B_n$ into at most $\lceil \sqrt \Delta \rceil$ subsets of size at most $\lceil \sqrt \Delta \rceil$. This is possible because every equivalence class with respect to $R_n$ contains at most $\Delta$ vertices. Now for each $B \in \mathcal B_n$, one of the subclasses keeps the same colour as in $c_{k,n}$, while the others are recoloured with distinct colours in $\overline 1, \overline 2, \dots , \overline{\lceil \sqrt \Delta \rceil}$.

Finally, we need to check that $c_{k+1}$ has the desired properties. By construction, it satisfies \ref{itm:ck-rootcolour}, \ref{itm:ck-awaycolour}, and \ref{itm:ck-restriction}. These properties also immediately imply that $\Gamma_{k+1} \subseteq \Gamma_k$, or more precisely that $\Gamma_{k+1} = (\Gamma_k)_{c_{k+1}}$. The colouring $c_{k+1}$ was constructed from $c_{k,n}$ in a way that every $B \in \mathcal B_n$ still contains some vertices with the same colour as in $c_{k,n}$. Hence every element of $\Gamma_k$ which preserves $c_{k+1}$ must also preserve $c_{k,n}$, whence $\Gamma_{k+1} \subseteq \Gamma_{k,n}$ by \ref{itm:cki-stabiliser} for $c_{k,n}$.

In particular, $\Gamma_{k,n}$ setwise fixes all equivalence classes with respect to $R_n$, and by construction of $c_{k+1}$ from $c_{k,n}$ we conclude that there are no orbits of size more than $\lceil \sqrt \Delta \rceil$ in $S(v_0,k+1)$. The second part of \ref{itm:ck-stabiliser} follows by induction.

For the first part of \ref{itm:ck-stabiliser}, assume for a contradiction that there is $\gamma \in \Gamma_{k+1}$ fixing every $B \in \mathcal B_n$ but not fixing $B(v_0,k)$ pointwise. Then for any $v\in S(v_0,k+1)$, the neighbours of $v$ and $\gamma v$ in $B(v_0,k)$ coincide. Hence we can define an automorphism $\gamma'$  of $G$ by
\[
    \gamma' v = \begin{cases}
    v & \text{if }v \in B(v_0,k),\\
    \gamma v  & \text{otherwise}.
    \end{cases}
\]
This contradicts infinite motion, as $\gamma^{-1} \gamma'$ only moves vertices inside $B(v_0,k)$. Thus we have proved that every element of $\Gamma_{k+1}$ fixes $B(v_0,k)$ pointwise.

\subsection{Number of used colours}

To determine the number of colours used in the colouring procedure, first note that at most $\lceil \sqrt \Delta \rceil$ colours from the set $\{\overline 1, \overline 2,\dots \}$ are used. Hence it only remains to determine the number of colours from $\mathbb N$ used throughout the procedure. 

For this purpose, let $v$ be a vertex with $c_\infty(v) \in \mathbb N$. If $v \in S(v_0,k+1)$, then the colour of $v$ is only changed during the construction of $c_{k,i}$, so it suffices to check how large the colour can get during this construction. 

By definition $c_{k,0}(v)=1$. For $0 \leq i \leq n$, let $B_i \in \mathcal B_i$ be the equivalence class of $v$ with respect to $R_i$. If $c_{k,i}(v) \neq c_{k,i+1}(v)$, we know that $B_{i+1} \in \mathcal S$, and that 
\[
c_{k,i+1}(v) \leq c_{k,i}(v) + |\mathcal S| \leq c_{k,i}(v) +  \frac{3\lceil \sqrt \Delta\rceil}2.
\]

Finally we need to determine how many times it can happen that $B_{i+1} \in \mathcal S$. 
Note that the first time this happens, there must be an automorphism moving $B_{i+1}$ to a different element of $\mathcal B_{i+1}$, and in particular $B_{i+1}$ cannot be the unique class whose size is larger than $\Delta$.
Every subsequent time, we have that $B_{i+1}$ and its image are subclasses of $B_{i}$ of equal size. Consequently $|B_{i+1}| \leq \frac {|B_{i}|}2$, and thus $B_{i+1} \in \mathcal S$ for at most $(1+\log_2 \Delta)$ many $i$. 

Summing up, we get that 
\[
c_{k+1}(v) = c_{k,n}(v) \leq 1 + (1+\log_2 \Delta) \frac{3\lceil \sqrt \Delta\rceil}2. 
\]
Together with the additional $\lceil \sqrt \Delta\rceil$ colours from the set $\{\overline 1, \overline 2,\dots\}$ we have thus used at most $1+(\frac 52 + \frac 32 \log_2 \Delta)\lceil \sqrt \Delta \rceil= \mathcal O(\sqrt\Delta \log \Delta)$ colours as claimed.
\end{proof}

\bibliographystyle{abbrv}
\bibliography{sources}

\begin{thebibliography}{10}

\bibitem{AC}
M.~O. {Albertson} and K.~L. {Collins}.
\newblock {Symmetry breaking in graphs.}
\newblock {\em {Electron. J. Comb.}}, 3(1), 1996.

\bibitem{BAB}
L.~{Babai}.
\newblock {Asymmetric trees with two prescribed degrees}.
\newblock {\em {Acta Math. Acad. Sci. Hung.}}, 29, 1977.

\bibitem{primitive}
P.~Cameron, P.~Neumann, and J.~Saxl.
\newblock On groups with no regular orbits on the set of subsets.
\newblock {\em Archiv der Mathematik}, 43:295--296, 01 1984.

\bibitem{subgroupchains}
P.~J. {Cameron}, R.~{Solomon}, and A.~{Turull}.
\newblock {Chains of subgroups in symmetric groups.}
\newblock {\em {J. Algebra}}, 127(2):340--352, 1989.

\bibitem{CT}
K.~L. {Collins} and A.~N. {Trenk}.
\newblock {The distinguishing chromatic number.}
\newblock {\em {Electron. J. Comb.}}, 13(1):research paper r16, 2006.

\bibitem{draft}
S.~H\"{u}ning, W.~Imrich, J.~Kloas, H.~Schreiber, and T.~W. Tucker.
\newblock Distinguishing graphs of maximum valence $3$.
\newblock {\em {Electron. J. Comb.}}, 26(4):research paper P4.36, 2019.

\bibitem{imrich}
W.~{Imrich}, S.~{Klav\v zar}, and V.~{Trofimov}.
\newblock {Distinguishing infinite graphs.}
\newblock {\em {Electron. J. Comb.}}, 14(1):research paper r36, 12, 2007.

\bibitem{smith}
W.~{Imrich}, S.~M. {Smith}, T.~W. {Tucker}, and M.~E. {Watkins}.
\newblock {Infinite motion and 2-distinguishability of graphs and groups.}
\newblock {\em {J. Algebr. Comb.}}, 41(1):109--122, 2015.

\bibitem{klavzar}
S.~{Klav\v zar}, T.-L. {Wong}, and X.~{Zhu}.
\newblock {Distinguishing labellings of group action on vector spaces and
  graphs.}
\newblock {\em {J. Algebra}}, 303(1):626--641, 2007.

\bibitem{lehner-random}
F.~{Lehner}.
\newblock {Random colourings and automorphism breaking in locally finite
  graphs.}
\newblock {\em {Comb. Probab. Comput.}}, 22(6):885--909, 2013.

\bibitem{lehner-intermediate}
F.~{Lehner}.
\newblock {Distinguishing graphs with intermediate growth.}
\newblock {\em {Combinatorica}}, 36(3):333--347, 2016.

\bibitem{lehner-edge}
F.~{Lehner}.
\newblock {Breaking graph symmetries by edge colourings.}
\newblock {\em {J. Comb. Theory, Ser. B}}, 127:205--214, 2017.

\bibitem{LPS}
F.~{Lehner}, M.~{Pil\'sniak}, and M.~{Stawiski}.
\newblock {Distinguishing infinite graphs with bounded degrees.}
\newblock {\em preprint}, arXiv:1810.03932, 2019.

\bibitem{russell}
A.~{Russell} and R.~{Sundaram}.
\newblock {A note on the asymptotics and computational complexity of graph
  distinguishability.}
\newblock {\em {Electron. J. Comb.}}, 5:research paper R23, 1998.

\bibitem{tucker}
T.~W. {Tucker}.
\newblock {Distinguishing maps.}
\newblock {\em {Electron. J. Comb.}}, 18(1):research paper p50, 21, 2011.

\bibitem{watkins}
M.~E. {Watkins} and X.~{Zhou}.
\newblock {Distinguishability of locally finite trees.}
\newblock {\em {Electron. J. Comb.}}, 14(1):research paper r29, 10, 2007.

\end{thebibliography}

\end{document}